\newtheorem{thm}{Theorem}
\newtheorem {prop}{Proposition}
\newtheorem {lem}{Lemma}
\newtheorem {cor}{Corollary}
\newtheorem{claim}{Claim}
\theoremstyle{definition}
\newtheorem{rem}{Remark}
\DeclareMathOperator\Aut{Aut \, }
\DeclareMathOperator\Gal{Gal }
\DeclareMathOperator\PSL{PSL }
\newcommand\X{\mathcal X}            
\newcommand\G{\overline{G}}
\newcommand\N{\overline{N}}
\newcommand\C{\mathbb C}
\begin{document}

\title{Generalized superelliptic Riemann surfaces}

\author{Rub\'en A. Hidalgo}
\address{Departamento de Matem\'atica y Estad\'{\i}stica, Universidad de La Frontera, Temuco, Chile.}
\email{ruben.hidalgo@ufrontera.cl, saul.quispe@ufrontera.cl}

\author{Sa\'ul Quispe}

\thanks{The first two authors were partially supported by Projects Fondecyt 1230001 and 1220261}

\author{Tony Shaska}
\address{Department of Mathematics, Oakland University, Rochester, MI, 48386. }
\email{shaska@oakland.edu}

\begin{abstract}
A conformal automorphism $\tau$, of order $n \geq 2$, of a closed Riemann surface $\X$, of genus $g \geq 2$, which is central in ${\rm Aut}(\X)$ and such that $\X/\langle \tau\rangle$ has genus zero, is  called a superelliptic automorphism of level $n$. If $n=2$, then $\tau$ is called hyperelliptic involution and it is known to be unique. In this paper, for the case $n \geq 3$, we investigate the uniqueness of the cyclic group $\langle \tau \rangle$.
Let $\tau_{1}$ and $\tau_{2}$ be two superelliptic automorphisms of level $n$ of $\X$.  If $n \geq 3$ is odd, then $\langle \tau_{1} \rangle =\langle \tau_{2} \rangle$.
In the case that $n \geq 2$ is even, then the same uniqueness result holds, up to some explicit exceptional cases. 
\end{abstract}

\keywords{generalized superelliptic curves, cyclic gonal curves, automorphisms, Riemann surfaces}
\subjclass[2010]{14H37; 14H45; 30F10}

\maketitle

\section{Introduction} 
Let  $\X$ be a closed Riemann surface of genus $g \geq 2$ and let $G=\Aut (\X)$ be its group of conformal automorphisms. It is well known that ${\rm Aut}(\X)$ is finite \cite{Schwarz} of order at most $84(g-1)$ \cite{Hurwitz}, and that the order of any conformal automorphism is bounded above by $4g+2$. This paper considers certain cyclic subgroups of ${\rm Aut}(\X)$ which behave similarly to the hyperelliptic involution.

Let $\tau \in G$ be an {\it $n$-gonal} automorphism, that is, it has order $n \geq 2$ and $\X/\langle \tau \rangle$ has genus zero. In this case, $H=\langle \tau \rangle \cong C_{n}$ is called an {\it $n$-gonal group} and $\X$ a {\it cyclic $n$-gonal} Riemann surface. Let $N$ be the normalizer of $\langle \tau \rangle$ in $G$. It follows from the results in \cites{Singerman, Greenberg}, that generically $N=G$.

If $n=2$, then $\tau$ is the hyperelliptic involution and it is known to be unique and central in $G$; in particular, it is central in $G=N$. 

If $n \geq 3$ is a prime integer and $s \geq 3$ is the number of fixed points of $\tau$, then $H$ is known to be the unique $n$-Sylow subgroup of $G$ if either (i) $2n<s$ \cite{Severi} or (ii) $n \geq 5s-7$ \cite{Hidalgo:pgrupos}. So, in this case, $N=G$; but it might be that $\tau$ is non-central.

If $n \geq 3$, not necessarily prime, such that: (i) every fixed point of a non-trivial power of $\tau$ is also a fixed point of $\tau$, and (ii) the rotation number of $\tau$ at each of its fixed points is the same, (some authors call $\tau$ a {\it superelliptic automorphism} and $\X$ a {\it superelliptic surface}), then $\tau$ is central in $N$ (see Corollary \ref{corosupereliptico}), but in general $N \neq G$. In this case, under the extra condition that $g>(n-1)^2$, it is known that $N=G$ \cite{K} (as a consequence of results in \cite{Accola1}). The computation of $G$ has been done in \cite{Sanjeewa}. Superelliptic Riemann surfaces have been studied in \cites{BHS, m-sh, MPRZ} and those with many conformal automorphisms and with CM structures have been considered in \cite{obus-sh}.

If $\tau$ is central in $G$ (respectively, central in $N$), then we call it a \textbf{superelliptic automorphism of level ${\bf n}$} (respectively, \textbf{generalized superelliptic automorphism of level ${\bf n}$}); we also say that $H=\langle \tau \rangle$ is a  \textbf{superelliptic group of level ${\bf n}$} (respectively, \textbf{generalized superelliptic group of level ${\bf n}$}), and that $\X$ is a \textbf{superelliptic curve of level ${\bf n}$} (respectively, \textbf{generalized superelliptic curve of level ${\bf n}$}). A superelliptic automorphism of level $n$ is automatically a generalized one; the converse is in general false (but generically true). Also, as previously noted, a superelliptic automorphism of order $n$ is a generalized superelliptic of level $n$ (but it might not be a superelliptic automorphism of level $n$).

In this paper, (i) we provide necessary and sufficient conditions for an $n$-gonal automorphism to be a generalized superelliptic automorphism of level $n$ (Theorem \ref{exponentes}) and (ii) we provide conditions for a superelliptic curve of level $n$ to have a unique superelliptic group of level $n$ (Theorem \ref{unicidad0} and Corollary \ref{unicidad}).

Before stating the above two results, let us recall some facts on $n$-gonal automorphisms.
Let us consider a pair $(\X,\tau)$, where $\tau$ is a $n$-gonal automorphism of $\X$. Set $H=\langle \tau \rangle \cong C_{n}$.
Let $\pi:\X \to \widehat \C$ be a Galois branched covering, whose deck covering group is $H=\langle \tau \rangle$, and let $p_{1},\ldots,p_{s} \in \widehat{\C}$ be its branch values. Then there are integers $l_{1},\ldots,l_{s} \in \{1,\ldots,n-1\}$ satisfying that $l_{1}+\cdots+l_{s}$ is a multiple of $n$ and $\gcd(n,l_{1},\ldots,l_{s})=1$, such that 
$\X$ can be described by an affine irreducible algebraic curve (which might have singularities) of the following form (called a \textbf{cyclic $n$-gonal curve})
\begin{equation}\label{ngonal}
\begin{array}{l}
y^{n}=\prod_{j=1}^{s}(x-p_{j})^{l_{j}}.
\end{array}
\end{equation}

If one of the branch values is $\infty$, say $p_{s}=\infty$, then we need to delete the factor $(x-p_{s})^{l_{s}}$ from the above equation.
In this algebraic model, $\tau$ and $\pi$ are given respectively by $\tau(x,y)=(x,\omega_{n}y)$, where $\omega_{n}=e^{2 \pi i/n}$, and $\pi(x,y)=x$.

\begin{thm}\label{exponentes}
Let $\X$ be a cyclic $n$-gonal Riemann surface, described by the cyclic $n$-gonal curve  \cref{ngonal}, 
and $N$ be the normalizer of $H=\langle \tau(x,y)=(x,\omega_{n}y) \rangle$ in $\Aut(\X)$. 
Let $\theta:N \to \overline{N}=N/H$ be the canonical projection homomorphism. Then 
$\tau$ is a generalized superelliptic automorphism of level $n$ if and only if for all $p_{j}$ and $p_{i}$ in the same $\theta(N)$-orbit it holds that $l_{j}=l_{i}$. 
\end{thm}

\begin{cor}\label{corosupereliptico}
Let $\X$ be a cyclic $n$-gonal Riemann surface, described by the cyclic $n$-gonal curve  \cref{ngonal}. 
If $l_{j}=l$, for every $j$, where $\gcd(n,l)=1$, then $\tau(x,y)=(x,\omega_{n}y) \in \Aut (\X)$ is a generalized superelliptic automorphism of level $n$.
\end{cor}

\begin{rem}
The above corollary states that a superelliptic automorphism is always a generalized superelliptic automorphism of level $n$ (but not necessarily a superelliptic automorphism of level $n$ as the normalizer $N$ might be smaller than the full group of automorphisms).
\end{rem}

\begin{thm}\label{unicidad0}
Let $\X$ be a cyclic $n$-gonal Riemann surface, admitting two superelliptic automorphisms $\tau$ and $\eta$, both of level $n$, such that 
$\langle \tau\rangle \neq \langle \eta \rangle$.  Then
\begin{enumerate}
\item[(I)] ${\rm Aut}(\X)/\langle \tau \rangle$ is either a non-trivial cyclic group of even order or a dihedral group of order a multiple of four;
\item[(II)] there is an integer $d \geq 2$ such that 
$n=2d$ and $\X$ can be represented by a cyclic $n$-gonal curve of the form
\begin{equation}\label{eq0}
\begin{array}{l}
\X: \quad y^{2d}=x^{2}\left(x^{2}-1\right)^{l_{1}}\left(x^{2}-a_{2}^{2}\right)^{l_{2}} \prod_{j=3}^{L}\left(x^{2}-a_{j}^{2}\right)^{2\widehat{l}_{j}},
\end{array}
\end{equation}
where 
$l_{1},l_{2},2\widehat{l}_{3},\ldots,2\widehat{l}_{L} \in \{1,\ldots,2d-1\}$, $l_{1}$ is odd, and 
either one of the two conditions (a) or (b) below holds for $l_{2}$.
\begin{enumerate}
\item[(a)] If $l_{2}=2\widehat{l}_{2}$, then $\gcd\left(d,l_{1},\widehat{l}_{2},\ldots,\widehat{l}_{L}\right)=1$.
\item[(b)] If  $l_{2}$ is odd, then $\gcd\left(d,l_{1},l_{2},\widehat{l}_{3},\ldots,\widehat{l}_{L}\right)=1$.
\end{enumerate}
In these cases, $\tau(x,y)=(x,\omega_{2d}y)$, $\eta(x,y)=(-x,\omega_{2d}y)$ and $\langle \tau,\eta\rangle \cong C_{2d} \times C_{2}$.

\end{enumerate}

\end{thm}

Those superelliptic Riemann surfaces of level $n=2d$, described by the cyclic $2d$-gonal curves in \cref{unicidad0}, will be called {\bf exceptionals}.

\begin{rem}
The cyclic $2d$-gonal curves $\X$, defined by \cref{eq0}, are cyclic $2d$-gonal curves $\X$ admitting two commuting cyclic $2d$-gonal automorphisms, $\tau, \eta$, such that $\langle \tau \rangle \neq \langle \eta \rangle$.
We should note that not all of them need to be hyperelliptic of level $n$; the theorem only asserts that the exceptional ones are some of them. For instance, in the case (b) with $d=2$, $l_{1}=1$ and $l_{2}=3$, the genus five curve 
$\X: \quad y^{4}=x^{2}(x^{2}-1)(x^{2}+1)^3$
admits the automorphism 
$\rho(x,y)=\left(ix, \frac{\sqrt{i} y^{3}}{x(x^{2}+1)^{2}}\right),$
for which $\rho \tau \rho^{-1}=\tau^{3}$.
\end{rem}

\begin{cor}\label{unicidad}
Let $\X$ be a Riemann surface admitting a superelliptic group $H$ of level $n$. Then $H$ is the unique superelliptic group of level $n$ of $\X$ if 
either: (1) $n=2$, or (2) $n \geq 3$ is odd , or (3) $n \geq 4$ is even and $\X/H$ has no cone point of order $n/2$.
\end{cor}

Finally, in the last section, we provide some discussion on the field of moduli of these superelliptic Riemann surfaces (see Theorem \ref{teounico}).

\medskip
\noindent  
{\bf Notations.}
We denote by $C_{n}$ the cyclic group of order $n$, by $D_{n}$ the dihedral group of order $2n$, by $A_n$ the alternating group,  and by $S_n$ the symmetric group. 

\section{Preliminaries}
\subsection{The finite groups of M\"obius transformations}\label{Sec:gruposfinitos}
Up to $\PSL_{2}(\C)$-conjugation, the finite subgroups of the group $\PSL_{2}(\C)$ of M\"obius transformations are given by (see, for instance, \cite{Beardon})
{\small
\begin{equation}
\begin{array}{lll}
C_{m}:=\big\langle a(x)=\omega_{m} x \big\rangle, \;
D_{m} :=\Big\langle a(x)=\omega_{m} x, b(x)=\frac{1}{x} \Big\rangle,\;
A_{4}:=\Big\langle a(x)=-x, b(x)=\frac{i-x}{i+x}\Big\rangle,\\
S_{4}:=\Big\langle a(x)=ix, b(x)=\frac{i-x}{i+x}\Big\rangle,\;
A_{5}  :=\Big\langle a(x)=\omega_{5} x, b(x)=\frac{(1-\omega_{5}^{4})x+(\omega_{5}^{4}-\omega_{5})}{(\omega_{5}-\omega_{5}^{3})x+(\omega_{5}^{2}-\omega_{5}^{3})}\Big\rangle,
\end{array}
\end{equation}
}\noindent
where $\omega_m$ is a primitive $m$-th root of unity. For each of the above finite groups $A$, 
a Galois branched covering $f_{A}:\widehat\C \to \widehat\C$, with deck group $A$, is given as follows 
\[
\begin{array}{lcll}
f_{C_{m}}(x)&=&x^{m}; & \mbox{branching: } \; (m,m).\\
f_{D_{m}}(x)&=&x^{m}+x^{-m}; & \mbox{branching: } \; (2,2,m).\\
f_{A_{4}}(x)&=&\dfrac{(x^{4}-2i\sqrt{3} x^{2}+1)^{3}}{-12i\sqrt{3} x^{2}(x^{4}-1)^{2}}; & \mbox{branching: } \; (2,3,3).\\
f_{S_{4}}(x)&=&\dfrac{(x^{8}+14x^{4}+1)^{3}}{108x^{4}(x^{4}-1)^{4}}; & \mbox{branching: } \; (2,3,4).\\
f_{A_{5}}(x)&=&\dfrac{(-x^{20}+228x^{15}-494x^{10}-228x^{5}-1)^{3}}{1728x^{5}(x^{10}+11x^{5}-1)^{5}}; & \mbox{branching: } \; (2,3,5),
\end{array}
\]
see \cite{Horiuchi}. 
In the above, the branching corresponds to the tuple of branch orders of the cone points of the orbifold $\widehat\C/A$.

\subsection{Fuchsian groups}
A \emph{Fuchsian group} is a discrete subgroup $\Delta$ of $\PSL_{2}({\mathbb R})$, the group orientation-preserving isometries of the hyperbolic plane ${\mathbb H}$. It is called co-compact if the quotient orbifold ${\mathbb H}/\Delta$ is compact; its signature is the tuple  $(g; n_{1},\ldots,n_{s})$,
where $g$ is the genus of the quotient orbifold ${\mathbb H}/\Delta$, $s$ is the number of its cone points they having branch orders $n_{1},\ldots, n_{s}$. The group $\Delta$ has a presentation as follows:
\begin{equation}\label{eq2}
\begin{array}{l}
\Gamma=\langle a_{1},b_{1},\ldots,a_{g}, b_{g},c_{1},\ldots,c_{s}:
 c_{1}^{n_{1}}=\cdots=c_{s}^{n_{s}}=1, c_{1}\cdots c_{s}[a_{1},b_{1}] \cdots [a_{g},b_{g}]=1\rangle,
\end{array}
\end{equation}
where $[a,b]=aba^{-1}b^{-1}$.

If a co-compact Fuchsian group $\Gamma$ has no torsion, then $\X={\mathbb H}/\Gamma$ is a closed Riemann surface of genus $g \geq 2$ and its signature is $(g;-)$. Conversely, by the uniformization theorem, every closed Riemann surface of genus $g \geq 2$ can be represented as above. In this case, by Riemann's existence theorem, 
 a finite group $G$ acts faithfully as a group of conformal automorphisms of $\X$ if and only if there is a co-compact Fuchsian group $\Delta$ and a surjective homomorphism $\theta:\Delta \to G$ whose kernel is $\Gamma$.

\subsection{Cyclic $n$-gonal Riemann surfaces}
Let $\X$ be a cyclic $n$-gonal Riemann surface of genus $g \geq 2$,  $\tau \in \Aut (\X)$ be a $n$-gonal automorphism and  $\pi: \X \to \widehat\C$ be a Galois branched cover whose deck group is the $n$-gonal group $H=\langle \tau \rangle \cong C_{n}$. Let $p_{1},\ldots, p_{s} \in \widehat\C$ be the branch values of $\pi$ and let us denote  by $n_{j} \geq 2$ (which is a divisor of $n$) the branch order of $\pi$ at $p_{j}$.

Let $\Delta$ be a Fuchsian group such that (up to biholomorphisms) ${\mathbb H}/\Delta=\X/\langle \tau \rangle$. Then $\Delta$ has 
signature $(0;n_{1},\ldots,n_{s})$ and a presentation 
\begin{equation}
\begin{array}{l}
\Delta=\langle c_{1},\ldots,c_{s}:  c_{1}^{n_{1}}=\cdots=c_{s}^{n_{s}}=1, c_{1}\cdots c_{s}=1\rangle.
\end{array}
\end{equation}

The branched Galois covering $\pi$ is determined by a surjective homomorphism $\rho:\Delta \to C_{n}=\langle \tau \rangle$ with a torsion-free kernel $\Gamma$ such that $\X={\mathbb H}/\Gamma$. (The homomorphism $\rho$ is uniquely determined up to post-composition by automorphisms of $C_{n}$ and pre-composition by an automorphism of $K$.)
Let $\rho(c_{j})=\tau^{l_{j}}$, where $c_{j}$ is as in \cref{eq2}, for $l_{1},\ldots, l_{s} \in \{1,\ldots,n-1\}$. 
As a consequence of Harvey's criterion \cite{Harvey}, 
\begin{enumerate}
\item[(a)] $n={\rm lcm}(n_{1},\ldots , n_{j-1}, n_{j+1},\ldots , n_{s})$ for all $j$;
\item[(b)] if $n$ is even, then $\#\{ j \in \{1,\ldots,s\}: n/n_{j} \; \mbox{is odd} \}$ is even.
\end{enumerate}

The equality $c_{1}\cdots c_{s}=1$ is equivalent to have 
 $l_{1}+\cdots+l_{s} \equiv 0 \; {\rm mod}(n)$, and 
the condition for $\Gamma=\ker(\rho)$ to be torsion-free is equivalent to have 
$\gcd(n,l_{j})=n/n_{j}$, for $j=1,\ldots, s$. The surjectivity of $\rho$ is equivalent to have $\gcd(n,l_{1},\ldots,l_{s})=1$, which in our case is equivalent to condition (a).
Condition (b) is equivalent to saying that for $n$ even the number of $l_{j}$'s being odd is even, which trivially holds.
Summarizing all the above,  
\begin{multicols}{2}
\begin{enumerate}
\item $l_{1},\ldots, l_{s} \in \{1,\ldots,n-1\}$,
\item $l_{1}+\cdots+l_{s} \equiv 0 \;{\rm mod}(n)$,
\item $\gcd(n,l_{j})=n/n_{j}$, for all $j$,
\item $\gcd(n,l_{1},\ldots,l_{s})=1$.
\end{enumerate}
\end{multicols}

The Riemann surface $\X$ can be described by the affine curve
\begin{equation}\label{ecuacion}
\begin{array}{l}
 \X: \quad y^{n}=\prod_{j=1}^{s} (x-p_{j})^{l_{j}},
 \end{array}
 \end{equation}
where,  
if one of the branched values is infinity, say $p_{s}=\infty$, then we need to delete the factor $(x-p_{s})^{l_{s}}$ in the above equation.

In such an algebraic model, $\tau(x,y)=(x,\omega_{n} y)$, where $\omega_{n}=e^{2 \pi i/n}$, and  $\pi(x,y)=x$. The branch order of $\pi$ at $p_{j}$ is $n_{j}=n/\gcd(n,l_{j})$ and, by the Riemann-Hurwitz formula, the genus $g$ of $\X$ is given by
\begin{equation}
\begin{array}{l}
g=1+\frac{1}{2}\left((s-2)n-\sum_{j=1}^{s} \gcd(n,l_{j})\right).
\end{array}
\end{equation}

\section{Proof of Theorem \ref{exponentes}}\label{Sec:GSRS}
Let $\X$ be a curve given by equation \cref{ngonal}, $\pi(x,y)=x$, and $\tau \in G=\Aut (\X)$.
Let $N$ be the normalizer of $H=\langle \tau \rangle$ in $G$. There is a short exact sequence 
\begin{equation}
\begin{array}{l}
1 \to H=\langle \tau \rangle \to N \stackrel{\theta}{\to} \overline{N}=N/H \to 1,
\end{array}
\end{equation}
where $\theta(\eta) \circ \pi = \pi \circ \eta$, for every $\eta \in N$. 

The reduced group of automorphisms
$\overline{N}=N/H < \PSL_{2}(\C)$ is a finite group keeping invariant the set $\{p_{1},\ldots,p_{s}\}$. 

\subsection{}
The following describes the form of those elements of $N$.

\begin{lem}\label{lemaconmuta}
Let $\eta \in N$ and  $l \in \{1,\ldots,n-1\}$ (necessarily relatively prime to $n$) such that $\eta \tau \eta^{-1}=\tau^{l}$. If 
$b=\theta(\eta)$, then $\eta(x,y)=(b(x),y^{l}Q(x))$, where $Q(x)$ is a suitable rational map.
\end{lem}
\begin{proof}
Let us note that $\eta(x,y)=(b(x),L(x,y))$, where $L(x,y)$ is a suitable rational map.
As $\eta(\tau(x,y))=\eta(x,\omega_{n}y)=(b(x),L(x,\omega_{n}y))$ and 
$\tau^{l}(\eta(x,y))=\tau^{l}(b(x),L(x,y))=(b(x),\omega_{n}^{l}L(x,y))$, the condition $\eta \tau \eta^{-1}=\tau^{l}$ holds if and only if 
$L(x,\omega_{n}y)=\omega_{n}^{l}L(x,y)$, that is, $L(x,y)=Q(x)y^{l}$, for a suitable rational map $Q(x) \in \C(x)$. 
\end{proof}

\begin{rem}
(1) Lemma \ref{lemaconmuta} asserts that those $\eta \in N$ commuting with $\tau$ have the form $\eta(x,y)=(b(x),Q(x)y)$.
(2) If $t \in \PSL_{2}(\C)$, then replacing $\pi$ by $t \circ \pi$ only exchanges the set of branch points $\{p_{1},\ldots,p_{s}\}$ for $\{t(p_{1}),\ldots,t(p_{s})\}$ but keeps invariant the set of exponents $l_{1},\ldots, l_{s}$.
\end{rem}

\subsection{}
Let $\eta \in N$ and assume $\theta(\eta)$ has order $m \geq 2$. As there is a suitable $t \in \PSL_{2}(\C)$ such that $t \theta(\eta) t^{-1}(x)=\omega_{m} x$, we may assume (by post-composing $\pi$ with $t$) that $\theta(\eta)(x)=\omega_{m}x$. So the cyclic $n$-gonal curve \cref{ecuacion} can be written as
\begin{equation}\label{equcentral}
\begin{array}{l}
y^{n}=x^{\alpha}\prod_{j=1}^{L} (x-q_{j})^{l_{j,1}}(x-\omega_{m} q_{j})^{l_{j,2}} \cdots (x-\omega_{m}^{m-1}q_{j})^{l_{j,m}},
\end{array}
\end{equation}
where
\begin{enumerate}
\item[(a)] the factor $x^{\alpha}$ only appears if one of the branch values $t(p_{j})$ is equal to zero, and 
\item[(b)] there is the following equality of sets of exponents in \cref{equcentral} and \cref{ecuacion} (where $\alpha$ needs to be deleted if none of the $t(p_{j})$'s is equal to zero)
\[
\{\alpha,l_{1,1},\ldots,l_{1,m},l_{2,1},\ldots,l_{2,m},\ldots,l_{L,1},\ldots,l_{L,m}\}=\{l_{1},\ldots,l_{s}\}.
\]
\end{enumerate}

In this model, $\tau(x,y)=(x,\omega_{n}y)$ and, by \cref{lemaconmuta}, $\eta(x,y)=(\omega_{m}x,Q(x)y^{l})$, for a suitable rational map $Q(x) \in \C(x)$.  

If $R(x)$ denotes the right side of \cref{equcentral}, then $Q(x)^{n}y^{ln}=R(\omega_{m}x)$ on $\X$, where 
\begin{equation}
\begin{array}{l}
R(\omega_{m}x)=\omega_{m}^{\alpha} x^{\alpha}   \prod_{j=1}^{L}  
\frac{ \omega_{m}^{r_{j}} (x-q_{j})^{l_{j,1}}(x-\omega_{m}q_{j})^{l_{j,2}} \cdots (x-\omega_{m}^{m-1}q_{j})^{l_{j,m}}}
{(x-q_{j})^{l_{j,1}-l_{j,2}}(x-\omega_{m}q_{j})^{l_{j,2}-l_{j,3}} \cdots (x-\omega_{m}^{m-1}q_{j})^{l_{j,m}-l_{j,1}}}\\
\\
=\frac{\omega_{m}^{(\alpha+\sum_{j=1}^{L}r_{j})}y^{n}}{
\prod_{j=1}^{L}  
(x-q_{j})^{l_{j,1}-l_{j,2}}(x-\omega_{m} q_{j})^{l_{j,2}-l_{j,3}} \cdots (x-\omega_{m}^{m-1} q_{j})^{l_{j,m}-l_{j,1}}}, 
\end{array}
\end{equation}
and $r_{j}=l_{j,1}+\cdots+l_{j,m}$, that is,
\begin{equation}
\begin{array}{l}
Q(x)^{n}y^{ln}=\frac{\omega_{m}^{(\alpha+\sum_{j=1}^{L}r_{j})}y^{n}}{
\prod_{j=1}^{L}  
(x-q_{j})^{l_{j,1}-l_{j,2}}(x-\omega_{m} q_{j})^{l_{j,2}-l_{j,3}} \cdots (x-\omega_{m}^{m-1} q_{j})^{l_{j,m}-l_{j,1}}}.
\end{array}
\end{equation}

In particular, 
\begin{equation}\label{19}
\begin{array}{l}
Q(x)^{n}y^{(l-1)n}=\frac{\omega_{m}^{(\alpha+\sum_{j=1}^{L}r_{j})}}{
\prod_{j=1}^{L}  
(x-q_{j})^{l_{j,1}-l_{j,2}}(x-\omega_{m}q_{j})^{l_{j,2}-l_{j,3}} \cdots (x-\omega_{m}^{m-1}q_{j})^{l_{j,m}-l_{j,1}}}.
\end{array}
\end{equation}

\subsection{}
Let us assume $\eta$ commutes with $\tau$, that is, $l=1$. We proceed to prove that the exponents $l_{j,i}$ are the same for every $i=1,\ldots,m$.
As $\theta(\eta)^{m}=1$, it follows that $\eta^{m} \in \langle \tau \rangle$, from which we must have that 
 \begin{equation}\label{potencia}
 \begin{array}{l}
 \left(\prod_{j=0}^{m-1}Q(\omega_{m}^{j}x)\right)^{n}=1. 
 \end{array}
 \end{equation}
 
 \begin{claim}\label{claim1}
 Equation \cref{potencia} asserts that $Q(x)$ is either an $nm$-root of unity or it has the form
\[
\begin{array}{l}
Q(x)=\lambda \prod_{u=1}^{A} \frac{x-\alpha_{u}}{x-\omega_{m}^{q_{u}}\alpha_{u}},
\end{array}
\]
 where $\lambda^{nm}=1$ and $q_{u} \in \{1,\ldots,m-1\}$. 
 \end{claim}
 \begin{proof}
If we write
 $$\begin{array}{l}
 Q(x)=\lambda \frac{\prod_{u=1}^{A} (x-\alpha_{u})}{\prod_{v=1}^{B}(x-\beta_{v})},
 \end{array}
 $$
 then 
 $$\begin{array}{l}
 \prod_{j=0}^{m-1} Q(\omega_{m}^{j}x)=\lambda^{m} \prod_{j=0}^{m-1} \omega_{m}^{(A-B)j} \frac{\prod_{u=1}^{A} (x-\omega_{m}^{m-j}\alpha_{u})}{\prod_{v=1}^{B}(x-\omega_{m}^{m-j}\beta_{v})}=
 \end{array}$$
 $$\begin{array}{l}
 =\lambda^{m} \omega_{m}^{(A-B)m(m-1)/2} \frac{\prod_{u=1}^{A} (x^{m}-\alpha_{u}^{m})}{\prod_{v=1}^{B}(x^{m}-\beta_{v}^{m})}.
 \end{array}$$
 Equation \cref{potencia} asserts that 
 $$\begin{array}{l}
 A=B, \quad 
 \prod_{v=1}^{B}(x^{m}-\beta_{v}^{m})=\lambda^{nm} \prod_{u=1}^{A} (x^{m}-\alpha_{u}^{m}). 
 \end{array}
 $$
 
 So, $\lambda^{nm}=1$ and, up to a permutation of indices, we may assume $\alpha_{u}^{m}=\beta_{u}^{m}$, for $u=1,\ldots, A$.
\end{proof}

By \cref{claim1}, either $l_{j,i}-l_{j,i+1}=0$ or $\omega_{m}^{i-1}q_{j}$ must be either a zero or a pole of order $n$ of the left side of \cref{19}, that is, each $l_{j,i}-l_{j,i+1} \in \{0,\pm n\}$. As $l_{j,i} \in \{1,\ldots, n-1\}$, it follows that $l_{j,1}=\cdots=l_{j,m}$.

\subsection{}
In the other direction, let us assume that $l_{j,1}=\cdots=l_{j,m}=l_{j}$, for every $j=1,\ldots,L$. In this case, $\X$ has equation
\begin{equation}
\begin{array}{l}
y^{n}=x^{\alpha} \prod_{j=1}^{L}(x^{m}-q_{j}^{m})^{l_{j}}.
\end{array}
\end{equation}

A lifting of $\theta(\eta)$ under $\pi(x,y)=x$ is of the form $\widehat{\eta}(x,y)=(\omega_{m}x,\omega_{m}^{\alpha/n}y)$. This asserts that $\eta=\widehat{\eta} \tau^{k}$, for some $k \in \{0,\ldots,n-1\}$, i.e., $\eta(x,y)=(\omega_{m}x,\omega_{n}^{k} \omega_{m}^{\alpha/n} y)$, that is $l=1$.

 \subsection{A consequence}\label{Sec:formacanonica}
The above permits us to observe that, if $\tau$ is a generalized superelliptic automorphism of level $n$, and the reduced group $\overline{N}$ admits an element of order $m$, 
then 
$\X$ can be represented by a cyclic $n$-gonal curve of the form 
\begin{equation}
\begin{array}{l}
\X: \quad y^{n}=x^{l_{0}} (x^{m}-1)^{l_{1}}\prod_{j=2}^{L}(x^{m}-a_{j}^{m})^{l_{j}},
\end{array}
\end{equation}
where any one of the following Harvey's conditions is satisfied:
\begin{enumerate}
\item if $l_{0}=0$, then $m(l_{1}+\cdots+l_{L}) \equiv 0 \;{\rm mod}(n)$ and $\gcd(n,l_{1},\ldots,l_{L})=1$; or
\item if $l_{0} \neq 0$, then 
$\gcd(n,l_{0},l_{1},\ldots,l_{L})=1$.
\end{enumerate}

Note that, in (2) above, either: 
(2.1) $l_{0}+m(l_{1}+\cdots+l_{L}) \equiv 0 \;{\rm mod}(n)$ in case $\infty$ is not a branch value, or
(2.2) $l_{0}+m(l_{1}+\cdots+l_{L}) \notequiv 0 \;{\rm mod}(n)$ in case $\infty$ is a branch value.

\section{Proof of Theorem \ref{unicidad0} and Corollary \ref{unicidad}} \label{Sec:unico}
Let us assume $\X$ admits two superelliptic automorphisms $\tau$ and $\eta$, both of level $n$, that is, each one being central in $G=\Aut (\X)$.
Let $H=\langle \tau \rangle$ and the reduced group $\G=G/H$.
We proceed to investigate when it is possible to have that $\eta \not\in H$.

\subsection{Proof of Theorem \ref{unicidad0}}
As the case $n=2$ corresponds to the hyperelliptic situation, and the hyperelliptic involution is unique, necessarily $n \geq 3$. 

\begin{prop}\label{propo1}
If $\G$ is either trivial, a dihedral group of order not divisible by $4$ or $A_{4}$ or $S_{4}$ or $A_{5}$, then $\eta \in H$.
\end{prop}

\begin{proof}
Assume, to the contrary, that $\eta \not\in H$. Then $\eta$ induces a non-trivial central element of the reduced group $\G$.
As the Platonic groups and the dihedral groups of order not divisible by $4$, have no nontrivial central element, this is a contradiction.
\end{proof}

Let us assume that  $\eta \not\in H$. So, by the above, $n \geq 3$ and 
$\G$ is either a non-trivial cyclic group or a dihedral group of order a multiple of $4$. 
Let us consider, as before, the canonical quotient homomorphism $\theta:G \to \G$,  and let $\pi:\X \to \widehat\C$ be a Galois branched cover with deck group $H$. As $\tau$ is central,
$K=\langle \tau, \eta\rangle<G$ is an abelian group and $\overline{K}=K/H =\langle \theta(\eta) \rangle \cong C_{m}$, where $n=md$ and $m \geq 2$. Since $\theta(\eta)$ has order $m$,  $\eta^{m} \in H$ and it has order $d$. So, replacing $\tau$ by a suitable power (still being a generator of $H$) we may assume that $\eta^{m}=\tau^{m}$.  Now, as noted in Section \ref{Sec:formacanonica},
 we may assume $\X$ to be represented by a cyclic $n$-gonal curve of the form 
\begin{equation}
\begin{array}{l}
\X: \quad y^{n}=x^{l_{0}} (x^{m}-1)^{l_{1}}\prod_{j=2}^{L}(x^{m}-a_{j}^{m})^{l_{j}},
\end{array}
\end{equation}
where one of the following Harvey's conditions is satisfied:
\begin{enumerate}
\item[(C1)] $l_{0}=0$, $m(l_{1}+\cdots+l_{L}) \equiv 0 \mod(n)$ and $\gcd(n,l_{1},\ldots,l_{L})=1$; or
\item[(C2)] $l_{0} \neq 0$ and $\gcd(n,l_{0},l_{1},\ldots,l_{L})=1$.
\end{enumerate}

In this algebraic model, $\tau(x,y)=(x,\omega_{n}y)$, $\pi(x,y)=x$ and $\theta(\eta)(x)=\omega_{m}x$ (where $\omega_{t}=e^{2 \pi i/t}$). In this way, $\eta(x,y)=(\omega_{m}x,\omega_{m}^{l_{0}/n}y)$. Since 
$\eta^{m}=\tau^{m}$ and $\eta$ has order $n$, we may assume the following
\begin{equation}
\left\{ \begin{array}{lll}
{\rm if } \; l_{0} \neq 0: & \eta(x,y)=(\omega_{m}x, \omega_{n}y) & \mbox{and } \; l_{0}=m, \\
{\rm if } \; l_{0}=0: & \eta(x,y)=(\omega_{m}x,y) & \mbox{and } \; n=m.
\end{array}
\right.
\end{equation}

\medskip

\noindent  \noindent  \textbf{i):}    \emph{Case $l_{0}=m$.} In this case,  $\eta(x,y)=(\omega_{m}x, \omega_{n}y)$ and we are in case (C2) above.
The $\eta$-invariant algebra $\C[x,y]^{\langle \eta\rangle}$ is generated by the monomials $u=x^{m}, v=y^{n}$ together those of the form $x^{a}y^{b}$, where $a \in \{0,1,\ldots,m-1\}$ and $b \in \{0,1,\ldots,n-1\}$ (where the case $a=b=0$ is not considered) satisfy that $a+b/d \equiv 0 \mod(m)$. In particular, $b=dr$ for $r \in \{0,1,\ldots,[(n-1)/d]\}$ so that $a+r \equiv 0 \mod(m)$. As $0 \leq a+r \leq (m-1)+[(n-1)/d] \leq (m-1)+[(md-1)/d]<2m$, it follows that $a+r \in \{0,m\}$. As the case $a+r=0$ asserts that $a=b=0$, which is not considered, we must have $a+r=m$, from which we see that the other generators are
given by $t_{1},\ldots, t_{m-1}$, where $t_{j}=x^{m-j}y^{dj}$. As consequence of invariant theory, the quotient curve $\X/\langle \eta\rangle$ corresponds to the algebraic curve
\begin{equation}
{\mathcal Y}: \quad \left\{ \begin{array}{lcl}
t_{1}^{m}&=&u^{m-1}v,\\
t_{2}^{m}&=&u^{m-2}v^{2},\\
&\vdots&\\
t_{m-1}^{m}&=&uv^{m-1},\\
v&=&u (u-1)^{l_{1}}\prod_{j=2}^{L}(u-a_{j}^{m})^{l_{j}}.
\end{array}
\right.
\end{equation}

The curve ${\mathcal Y}$ admits the automorphisms $T_{1},\ldots, T_{m-1}$, where $T_{j}$ is just an amplification of the $t_{j}$-coordinate by $\omega_{m}$ and acts as the identity on all the other coordinates. The group generated by all of these automorphisms is 
\begin{equation}
{\mathcal U}=\langle T_{1},\ldots,T_{m-1}\rangle \cong C_{m}^{m-1}.
\end{equation} 
The  Galois branched cover map $\pi_{\mathcal U}:{\mathcal Y} \to \widehat\C: (u,v,t_{1},\ldots,t_{m-1}) \mapsto u$ has ${\mathcal U}$ as its deck group. Let us observe that the values $0$, $a_{1}^{m},\ldots, a_{L}^{m}$ belongs to the branch set of $\pi_{\mathcal U}$.
Since ${\mathcal Y}=\X/\langle \eta \rangle$ has genus zero and the finite abelian groups of automorphisms of the Riemann sphere are either the trivial group, a cyclic group or $V_{4}=C_{2}^{2}$, the group ${\mathcal U}$ is one of these three types.
As $m \geq 2$, the group ${\mathcal U}$ cannot be the trivial group nor can it be isomorphic to the Klein group $V_{4}=C_{2}^{2}$. It follows that  
${\mathcal U}$ is a cyclic group; so $m=2$ and,  in particular, $n=2d$, where $d \geq 2$, and 
\begin{equation}\label{dd}
\begin{array}{l}
\X: \quad y^{2d}=x^{2} (x^{2}-1)^{l_{1}} \prod_{j=2}^{L}(x^{2}-a_{j}^{2})^{l_{j}}.
\end{array}
\end{equation}

Harvey's condition (a) is equivalent to have $\gcd(2d,2,l_{1},\ldots,l_{L})=1$, which is satisfied if some of the exponents $l_{j}$ is odd. Without loss of generality, we may assume that $l_{1}$ is odd.   In this case the curve ${\mathcal Y}$ is given by 
\begin{equation}
{\mathcal Y}: \quad \left\{ \begin{array}{lcl}
t_{1}^{2}&=& uv,\\
v&=&  u(u-1)^{l_{1}}\prod_{j=2}^{L}(u-a_{j}^{2})^{l_{j}},
\end{array}
\right.
\end{equation}
which is isomorphic to the curve 
\begin{equation}\label{cero}
\begin{array}{l}
w^{2}=(u-1)^{l_{1}}\prod_{j=2}^{L}(u-a_{j}^{2})^{l_{j}}.
\end{array}
\end{equation}

As this curve must have genus zero, and $l_{1}$ is odd, the number of indices $j \in \{2,\ldots,L\}$ for which $l_{j}$ is odd must be at most one.

\begin{enumerate}
\item[(i)] If $l_{1}$ is the only odd exponent and $l_{j}=2\widehat{l}_{j}$, for $j=2,\ldots,L$, then the condition 
$\gcd(2d,2,l_{1},2\widehat{l}_{2},\ldots,2\widehat{l}_{L})=1$ is equivalent to
$\gcd(d,l_{1},\widehat{l}_{2},\ldots,\widehat{l}_{L})=1$.

\item[(ii)] If there are exactly two of the exponents being odd, then we may assume, without loss of generality, that $l_{1}$ and $l_{2}$ are the only odd exponents. This means that the curve
in \cref{cero} is isomorphic to $\widehat{w}^{2}=(u-1)^{l_{1}}(u-a_{2}^{2})^{l_{2}}$, where $\widehat{w}=w/\prod_{j=3}^{L}(u-a_{j})^{l_{j}/2}$.
If we write $l_{j}=2\widehat{l}_{j}$, for $j=3,\ldots,L$, then the condition 
$\gcd(2d,2,l_{1},l_{2},2\widehat{l}_{3},\ldots,2\widehat{l}_{L})=1$ is equivalent to 
$\gcd(d,l_{1},l_{2},\widehat{l}_{3},\ldots,\widehat{l}_{L})=1$.
\end{enumerate}

\medskip
\noindent  \textbf{ii):}    \emph{Case $l_{0}=0$.} In this case, $m=n$, $\eta(x,y)=(\omega_{n}x, y)$ and we are in case (C1) above.
The $\eta$-invariants algebra $\C[x,y]^{\langle \eta\rangle}$ is generated by the monomials $u=x^{n}, v=y$. As a consequence of the invariant theory, the quotient curve $\X/\langle \eta\rangle$ corresponds to one of the following algebraic curves 
\begin{equation}
{\mathcal Y}_{1}: \quad \left\{ \begin{array}{lcl}
v^{n}&=&(u-1)^{l_{1}}
\end{array}
\right.
\end{equation}
or 
\begin{equation}
{\mathcal Y}_{2}: \quad \left\{ \begin{array}{lcl}
v^{n}&=&(u-1)^{l_{1}}\prod_{j=2}^{L}(u-a_{j}^{n})^{l_{j}}.
\end{array}
\right.
\end{equation}

As 
${\mathcal Y}$ must have genus zero and $n \geq 3$, we should have either ${\mathcal Y}_{1}$ or ${\mathcal Y}_{2}$ with $L=2$ and $l_{1}+l_{2} \equiv 0 \mod(n)$.
In particular, we have one of the two cases below for $\X$:
\begin{equation}
\begin{array}{ll}
(1) & \X: \quad y^{n}=(x^{n}-1)^{l_{1}}.\\
(2)& \X: \quad y^{n}=(x^{n}-1)^{l_{1}}(x^{n}-a_{2}^{n})^{l_{2}}, \; l_{1}+l_{2} \equiv 0 \mod(n).
\end{array}
\end{equation}

Note that, for situation (1) above,  we may assume $l_{1}=1$ (this is the classical Fermat curve of degree $n$). As the group of automorphisms of the classical Fermat curve of degree $n$ is $C_{n}^{2} \rtimes S_{3}$, we may see that $\tau$ is not central; that is, it is not a generalized superelliptic Riemann surface of level $n$. In case (2), Harvey's conditions hold exactly when $\gcd(n,l_{1},l_{2})=1$. As $l_{1}+l_{2} \equiv 0 \mod(n)$ and $l_{1},l_{2} \in \{1,\ldots,n-1\}$, we have that $l_{1}+l_{2}=n$. 
If we write $l_{2}=n-l_{1}$, then 
\begin{equation}
\begin{array}{l}
\left(\frac{x^{n}-1}{x^{n}-a_{2}^{n}}\right)^{l_{1}}=\frac{y^{n}}{(x^{n}-a_{2}^{n})^{n}},
\end{array}
\end{equation}
and by writing $l_{1}=n-l_{2}$ we also have that
\begin{equation}
\begin{array}{l}
\left(\frac{x^{n}-a_{2}^{n}}{x^{n}-1}\right)^{l_{2}}=\frac{y^{n}}{(x^{n}-1)^{n}}.
\end{array}
\end{equation}
Then the M\"obius transformation $M(x)=a_{2}/x$ induces the automorphism
\begin{equation}
\begin{array}{l}
\alpha(x,y)=\left(\omega_{n}\frac{a_{2}}{x},\frac{-a_{2}^{l_{2}} (x^{n}-1)(x^{n}-a_{2}^{n})}{x^{n} y} \right),
\end{array}
\end{equation}
which does not commute with $\eta(x,y)=(\omega_{n}x,y)$ since $n \geq 3$, a contradiction.

\subsection{Proof of Corollary \ref{unicidad}}
Let $\X$ be a a cyclic $n$-gonal Riemann surface admitting superelliptic automorphisms $\tau, \eta$, both of level $n$, such that $\langle \tau \rangle \neq \langle \eta \rangle$.  By Theorem \ref{unicidad0}, $\X$ has an equation of the form as in \cref{dd}, where $n=2d \geq 4$. The factor $x^{2}$ in such an equation asserts that $0$ is a branch value of order $d=n/2$.

\section{A remark on the field of moduli of superelliptic curves}\label{Sec:moduli}
\subsection{Field of definitions and the field of moduli}
As a consequence of the Riemann-Roch theorem, every closed Riemann surface $\X$ can be described as a complex projective irreducible algebraic curve, say defined as the common zeros of the homogeneous polynomials $P_{1},\ldots, P_{r}$.    If $\sigma \in \Gal (\C)$, the group of field automorphisms of $\C$,  then $X^{\sigma}$ will denote the curve defined as the common zeros of the polynomials $P_1^{\sigma}, \ldots, P_{r}^{\sigma}$, where $P_{j}^{\sigma}$ is obtained from $P_{j}$ by applying $\sigma$ to its coefficients. The new algebraic curve $\X^{\sigma}$ is again a closed Riemann surface of the same genus. Let us observe that, if $\sigma, \tau \in \Gal (\C)$, then $X^{\sigma \tau}=(X^{\sigma})^{\tau}$ (we multiply the permutations from left to right).
A subfield ${\mathbb L}$ of $\C$ is called a \emph{field of definition} of $\X$ if there is a curve ${\mathcal Y}$, defined over ${\mathbb L}$, which is isomorphic to $\X$ over $\C$.  Weil's descent theorem \cite{Weil} provides sufficient conditions for a given subfield of $\C$ to be a field of definition of $\X$. These conditions hold 
 if $\X$ has no non-trivial automorphisms (a generic situation for $g \geq 3$).

If $G_{\X}$ is the subgroup of $\Gal (\C)$ consisting of those $\sigma$ so that $\X^{\sigma}$ is isomorphic to $\X$, then the fixed field $M_{\X}$ of $G_{\X}$ is called \emph{the field of moduli} of $\X$. 
The notion of the field of moduli was originally introduced by Shimura \cite{Shimura} for the case of abelian varieties and later extended to more general algebraic varieties by Koizumi \cite{Koizumi}. In that same paper, Koizumi observed that: 
(i) $M_{\X}$ is the intersection of all the fields of definition of $\X$, and 
(ii) $\X$ has a field of definition being a finite extension of $M_{\X}$.

There are examples for which the field of moduli is not a field of definition \cites{Earle, Shimura}. 
In \cite{HQ} the following sufficient condition for a surface to be definable over its field of moduli was obtained.

\begin{thm} \label{thm1}
Let $\X$ be a Riemann surface of genus $g \geq 2$ admitting a subgroup $L<\Aut (\X)$ so that $\X/L$ has genus zero. If 
$L$ is unique in $\Aut(\X)$ and the reduced group  $\Aut(\X)/L$ is different from trivial or cyclic, then $\X$ is definable over its field of moduli.
\end{thm}
 
If $\X$ is hyperelliptic and $L$ is the cyclic group generated by the hyperelliptic involution, then the above 
result is due to Huggins \cite{Hu2}.
 
Another sufficient condition on a curve $\X$ to be definable over its field of moduli, which in particular contains the case of quasiplatonic curves, was provided in \cite{AQ}. We say that $\X$ has \emph{odd signature} if $\X/\Aut (\X)$ has genus zero and in its signature one of the cone orders appears an odd number of times.
 
\begin{thm} \label{thm2}
Let $\X$ be a Riemann surface of genus $g \geq 2$. If $\X$ has an odd signature, then it can be defined over its field of moduli.
\end{thm}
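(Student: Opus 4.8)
The plan is to realize the obstruction to descent as a twisted form of the quotient $\X/G$, where $G={\rm Aut}(\X)$, and then to use the odd-signature hypothesis to trivialize it. Write $M=M_{\X}$ for the field of moduli and let $\pi\colon\X\to\X/G$ be the quotient map; by hypothesis $\X/G$ has genus zero, so $\X/G\cong\mathbb{P}^{1}$. First I would record that for each $\sigma\in{\rm Gal}(\mathbb{C}/M)$ there is, by the very definition of the field of moduli, an isomorphism $f_{\sigma}\colon\X\to\X^{\sigma}$. The difficulty in applying Weil's descent theorem directly (which requires $g\geq 2$, as assumed) is that $f_{\sigma}$ is only determined up to composition with an element of $G$, so the collection $\{f_{\sigma}\}$ need not satisfy the Weil cocycle condition; the whole point is to control this ambiguity.

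Next I would pass to the quotient. Since any two choices of $f_{\sigma}$ differ by an element of $G$, and every element of $G$ acts as the identity on $\X/G$, the induced map $\bar{f}_{\sigma}\colon\X/G\to\X^{\sigma}/G^{\sigma}=(\X/G)^{\sigma}$ is a well-defined M\"obius transformation, independent of the choice of $f_{\sigma}$. A direct check then gives $\bar{f}_{\tau\sigma}=\bar{f}_{\sigma}^{\tau}\circ\bar{f}_{\tau}$, so $\{\bar{f}_{\sigma}\}$ is a genuine ${\rm PGL}_{2}(\mathbb{C})$-valued $1$-cocycle for ${\rm Gal}(\mathbb{C}/M)$. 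Applying descent to the genus-zero curve $\X/G$ (equivalently, using that cohomology with values in ${\rm PGL}_{2}$ classifies twisted forms of $\mathbb{P}^{1}$), this cocycle produces a smooth conic $\B$ defined over $M$ together with an isomorphism $\B\times_{M}\mathbb{C}\cong\X/G$. Because the branch locus of $\pi$ and its partition by cone order are canonical, hence Galois-stable, they descend: for each cone order $c$ the set of branch points of order $c$ becomes an $M$-rational divisor $D_{c}$ on $\B$.

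Now I would invoke the odd-signature hypothesis. By assumption some cone order $c_{0}$ occurs an odd number of times, so $D_{c_{0}}$ is an $M$-rational divisor of odd degree on the conic $\B$. On a smooth conic without an $M$-rational point the Picard group is generated by the hyperplane class, which has degree two, so every $M$-rational divisor has even degree; the existence of $D_{c_{0}}$ therefore forces $\B$ to carry an $M$-rational point, whence $\B\cong\mathbb{P}^{1}_{M}$. In particular $\B$ has infinitely many $M$-rational points, and we may choose one, say $p$, lying off the finite branch locus. Thus the quotient orbifold, with all its branch data, is defined over the field of moduli.

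The final step, which I expect to be the main obstacle, is to promote this to a descent of $\X$ itself. The idea is to use the chosen $M$-rational point $p\in\B$ to rigidify the cover: the fibre $\pi^{-1}(p)$ and the monodromy data are now permuted compatibly by ${\rm Gal}(\mathbb{C}/M)$, and this should let one normalize the isomorphisms $f_{\sigma}$ so that the residual $G$-ambiguity is eliminated and the cocycle condition $f_{\tau\sigma}=f_{\sigma}^{\tau}\circ f_{\tau}$ holds; Weil's descent theorem then yields a model of $\X$ over $M$. The delicate point is precisely this passage from the trivialized cocycle downstairs to a genuine Weil cocycle upstairs, where one must verify that the correction terms coming from $G$ can be absorbed. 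It is exactly here that the rational point on $\B$, guaranteed by the odd-signature hypothesis, is essential; the conic-descent and odd-degree-divisor arguments of the earlier steps are comparatively formal.
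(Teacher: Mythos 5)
The paper does not prove this statement; it is quoted from Artebani--Quispe \cite{AQ}, so the only meaningful comparison is with the argument in that source. Your strategy is in fact exactly theirs: descend the quotient $\X/{\rm Aut}(\X)$ to a conic $\B$ over the field of moduli $M$ together with the $M$-rational divisors $D_{c}$ of branch points of each cone order, use the odd-multiplicity cone order to produce an odd-degree $M$-rational divisor, conclude $\B\cong\mathbb{P}^{1}_{M}$, and pick an $M$-point off the branch locus. These intermediate steps are correct as you state them (the only cosmetic point is that descending the genus-zero curve $\X/G$ uses the classical correspondence between ${\rm PGL}_{2}$-valued cocycles and conics, not the $g\geq 2$ version of Weil's theorem quoted in the paper).

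The genuine gap is your final step, and you have correctly identified it yourself: the implication ``the canonical $M$-model $\B$ of $\X/{\rm Aut}(\X)$ has an $M$-rational point outside the branch locus, hence $\X$ is definable over $M$'' does not follow by merely ``normalizing the $f_{\sigma}$''; it is precisely the theorem of D\`ebes and Emsalem (\emph{On fields of moduli of curves}, J.~Algebra 211 (1999)), which is the engine of the Artebani--Quispe proof and is invoked there as a black box. Eliminating the $G$-ambiguity in the $f_{\sigma}$ is delicate: the naive attempt produces an obstruction with values in $G$ (in the cover-theoretic formulation, an obstruction to descending $\X\to\B$ as a mere cover rather than as a $G$-cover), and one must use the unbranched rational point $p$ to rigidify the situation --- via the ${\rm Gal}(\mathbb{C}/M)$-action on the fibre $\pi^{-1}(p)$, or on the fundamental group of $\B$ minus the branch divisor based at $p$ --- before Weil's cocycle condition can actually be verified. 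As written, your proof asserts rather than establishes this step. The fix is simply to cite D\`ebes--Emsalem there, after which the argument is complete and coincides with the one in \cite{AQ}.
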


\subsection{Minimal fields of definition of superelliptic curves}
Let $\X$ be a superelliptic curve of level $n$ and $H=\langle \tau \rangle \leq \Aut(\X)$ be a superelliptic group of level $n$.
If $\X$ is non-exceptional, then $H$ is unique (\cref{unicidad}). So, if $\Aut(\X)/H$ is different from trivial or cyclic, then $\X$ is definable over its field of moduli by Theorem \ref{thm1}. By Theorem \ref{thm2}, the same result holds if $\X$ has od signature.

At the level of the exceptional ones, we have seen that $H$ is not unique. But, if $\eta$ is another superelliptic automorphism of level $n$, then there is a power of $\eta$ inside $H$. In this case, we have seen that the quotient of $\X$ by the abelian group $K=\langle \tau, \eta \rangle$ has an odd signature. If $\Aut(\X)=K$, then again $\X$ is definable over its field of moduli.

\begin{thm}\label{teounico}
Let $H \cong C_{n}$ be a superelliptic group of a superelliptic curve $\X$. Then $\X$ is definable over its field of moduli if $\X$ is non-exceptional with either (i) ${\rm Aut}(\X)/H$ different from trivial or cyclic or (ii) ${\rm Aut}(\X)/H$ either trivial or cyclic and $\X$ has an odd signature.
\end{thm}

\section{Appendix A: Algebraic equations for generalized superelliptic curves}\label{Sec:Auto}
Let $\X$ be a generalized superelliptic curve of level $n$ and $\tau \in G=\Aut(\X)$ be a generalized superelliptic automorphism of level $n$ (so, it is central in its normalizer $N$). 
We proceed to describe explicit algebraic equations for $\X$ and also explicit generators for $N$, by making a subtle modification of the classical method done by Horiuchi in \cite{Horiuchi} for the hyperelliptic situation.

Let $\pi:\X \to \widehat\C$ be a Galois branched cover 
with deck group $H=\langle \tau \rangle$ and let ${\mathcal B}_{\pi}=\{p_{1},\ldots,p_{s}\} \subset \widehat\C$ be its set of branch values. 
Let $\theta:N \to \N$ be the surjective homomorphism satisfying $\theta(\eta) \circ \pi = \pi \circ \eta$, for every $\eta \in N$. Recall that 
$\N$ is one of the finite subgroups of $\PSL_{2}(\C)$ (as described in \cref{Sec:gruposfinitos}) keeping the set ${\mathcal B}_{\pi}$ invariant. 

Let us consider the Galois branched cover $f=f_{\N}:\widehat\C \to \widehat\C$ with $\N$ as its deck group (as described in \cref{Sec:gruposfinitos}). Let $P(x), Q(x) \in \C[x]$ be relatively prime polynomials such that $f(x)=\frac {P(x)} {Q(x)}$.

The collection ${\mathcal B}_{\pi}$ is $\N$-invariant and, 
by \cref{exponentes}, if for $t \in \N$ it holds that $t(p_{i})=p_{j}$, then $l_{i}=l_{j}$. In particular, we may consider the partition
${\mathcal B}_{\pi}={\mathcal B}_{\pi}^{crit} \cup {\mathcal B}_{\pi}^{*}$, where ${\mathcal B}_{\pi}^{crit}$ consists of those branch values with non-trivial $\N$-stabilizer. For simplicity, we assume $\infty \notin {\mathcal B}_{\pi}^{*}$ (but it might happen that $\infty \in {\mathcal B}_{\pi}^{crit}$).

\subsection{Horiuchi's general process}
\subsubsection{Computing algebraic models}

There is at most $T \leq 3$ disjoint $\N$-orbits of the points in ${\mathcal B}^{crit}_{\pi}$.

If $\N \cong C_{m}$, then $T \leq 2$; each such orbit has cardinality one.

If $\N \cong D_{m}$, then $T \leq 3$; at most one orbit of cardinality $2$ and at most two others, each of cardinality $m$.

If $\N \cong A_{4}$, then $T \leq 3$; at most one orbit of cardinality $6$ and at most two others, each of cardinality $4$.

If $\N \cong S_{4}$, then $T \leq 3$; at most one orbit of cardinality $8$, one of cardinality $6$ and another of cardinality $12$.

If $\N \cong A_{5}$, then $T \leq 3$; at most one orbit of cardinality $20$, one of cardinality $30$ and another of cardinality $12$.

Let us denote these orbits (eliminating $\infty$ from its orbit if it is a branch value of $\pi$) by
\begin{equation}
{\mathcal O}^{crit}_{u}=\{q_{u,1},\ldots,q_{u,s_{u}}\},\; u=1,\ldots, T,
\end{equation}
where $s=s_{1}+\cdots+s_{T}$ is the cardinality of ${\mathcal B}_{\pi}^{crit}$ if $\infty \notin {\mathcal B}_{\pi}^{crit}$ (otherwise, this cardinality is $s+1$).

Similarly,  let the disjoint $\N$-orbits of the points in ${\mathcal B}^{*}_{\pi}$ be given by
\begin{equation}
{\mathcal O}^{*}_{k}=\{p_{k,1},\ldots,p_{k,|\N|}\},\; k=1,\ldots, L,
\end{equation}\
(so, $L|\N|$ is the cardinality of ${\mathcal B}_{\pi}^{*}$).

As, for $k=1,\ldots,L$, 
$$\begin{array}{l}
\prod_{j=1}^{|\N|} (x-p_{k,j})=P(x)-f(p_{k,1})Q(x),
\end{array}
$$
our curve can be written as 
\begin{equation}\label{curvita}
\begin{array}{l}
\X: \quad y^{n}=\prod_{u=1}^{T} R_{u}(x)^{{\widehat l}_{u}}   \prod_{k=1}^{L} \left(P(x)-f(p_{k,1})Q(x)\right)^{\widetilde{l}_{k}},
\end{array}
\end{equation}
where 
\begin{enumerate}
\item $R_{u}(x)=\prod_{j=1}^{s_{u}} (x-q_{u,j})$

\item $\widehat{l}_{u} \in \{0,1,\ldots,n-1\}$ and $\widetilde{l}_{k} \in \{1,\ldots,n-1\}$.

\item $\gcd(n,\widehat{l}_{1},\ldots\widehat{l}_{T},\widetilde{l}_{1},\ldots,\widetilde{l}_{L})=1$ (where we eliminate a zero if appears).
\item if $\infty \notin {\mathcal B}^{crit}_{\pi}$, then
$$\begin{array}{l}
\sum_{u=1}^{T} s_{u}\widehat{l}_{u} + \sum_{k=1}^{L} |\N| \widetilde{l}_{k} \equiv 0 \mod n.
\end{array}
$$

\item If  $\infty \in {\mathcal O}_{v}^{crit}$, then
$$\begin{array}{l}
(1+s_{v}) \widehat{l}_{v}+\sum_{u=1, u \neq v}^{T} s_{u}\widehat{l}_{u} + \sum_{k=1}^{L} |\N| \widetilde{l}_{k} \equiv 0 \mod n.
\end{array}
$$

\end{enumerate}

\subsubsection{Computing the elements of $N$}
Let $\eta \in N$ and $b=\theta(\eta)$. As $\tau$ commutes with $\eta$, by \cref{lemaconmuta}, $\eta(x,y)=(b(x),F(x)y)$, where $F(x) \in \C(x)$. Below, we sketch how to compute such $F(x)$.

\begin{lem}\label{lemita1}
Let ${\mathcal O}=\{a_{1},\ldots,a_{r}\}$ a full $\N$-orbit (in our case, this is one of the ${\mathcal O}^{crit}_{u}$ or ${\mathcal O}^{*}_{k}$). If $b \in \N$, then the following hold.
\begin{enumerate}
\item If $\infty \notin {\mathcal O}$, then
$$\begin{array}{l}
\prod_{j=1}^{r}(b(x)-a_{j})=\left(b'(x)\right)^{r/2} \left(\prod_{j=1}^{r} b'(a_{j})\right)^{1/2} \prod_{j=1}^{r}(x-a_{j}).
\end{array}
$$

\item If $a_{r}=\infty$ and $b(\infty)=\infty$, then
$$\begin{array}{l}
\prod_{j=1}^{r-1}(b(x)-a_{j})=\left(b'(x)\right)^{(r-1)/2} \left(\prod_{j=1}^{r-1} b'(a_{j})\right)^{1/2} \prod_{j=1}^{r-1}(x-a_{j}).
\end{array}
$$

\item If $a_{r}=\infty$, $b(a_{r-1})=\infty$ and $b(\infty)=a_{s}$, where $s \neq r-1$, and $b(a_{t})=a_{r-1}$, then 
$$\begin{array}{l}
\prod_{j=1}^{r-1}(b(x)-a_{j})=
\end{array}$$
$$\begin{array}{l}
\frac{(a_{r-1}-a_{s})^{1/2}(a_{r-1}-a_{t})^{1/2}}{x-a_{r-1}}\left(b'(x)\right)^{(r-1)/2} \left(\prod_{j=1}^{r-2} b'(a_{j})\right)^{1/2} \prod_{j=1}^{r-1}(x-a_{j}).
\end{array}$$

\item If $a_{r}=\infty$, $b(a_{r-1})=\infty$ and $b(\infty)=a_{r-1}$,  then 
$$\begin{array}{l}
\prod_{j=1}^{r-1}(b(x)-a_{j})=-\left(b'(x)\right)^{r/2} \left(\prod_{j=1}^{r-2} b'(a_{j})\right)^{1/2} \prod_{j=1}^{r-1}(x-a_{j}).
\end{array}
$$

\end{enumerate}
\end{lem}

\begin{proof}
The equalites are consequence of the fact that, for $a, b(a) \in \C$,
\[
b(x)-b(a)=b'(x)^{1/2} b'(a)^{1/2} (x-a).
\]
\end{proof}

If, in the above lemma, we replace ${\mathcal O}$ by ${\mathcal O}^{crit}_{u}$, then we obtain an equality
$$\begin{array}{l}
R_{u}(b(x))=\prod_{j=1}^{s_{u}}(b(x)-q_{u,j})=Q_{u}(x) \prod_{j=1}^{s_{u}}(x-q_{u,j})=Q_{u}(x) R_{u}(x).
\end{array}
$$

Similarly, if we replace ${\mathcal O}$ by ${\mathcal O}^{*}_{k}$ and set
$$\begin{array}{l}
S_{k}(x):=P(b(x))-f(p_{k,1})Q(b(x))=\prod_{j=1}^{|\N|}(x-p_{k,j}),
\end{array}
$$
then we obtain an equality
$$\begin{array}{l}
S_{k}(b(x))=\prod_{j=1}^{|\N|}(b(x)-p_{k,j})=L_{k}(x) \prod_{j=1}^{|\N|}(x-p_{k,j})=L_{k}(x) S_{k}(x).
\end{array}
$$

It can be checked, by plugging directly into the equation for $\X$, that
\begin{equation}
\begin{array}{l}
F(x)^{n}=\prod_{u=1}^{T}Q_{u}(x)^{\widehat{l}_{u}} \prod_{k=1}^{L} L_{k}(x)^{\widetilde{l}_{k}}.
\end{array}
\end{equation}

\subsection{Explicit computations}
Below, for each of the possibilities for $\N$, we proceed to explicitly describe the above procedure. 
In the following, if $l_{u}>0$, then we set $n_{u}=\gcd(n,l_{u})$.

\begin{thm}\label{gonalescentral}
Let $\X$ be a generalized superelliptic curve of level $n$, $\tau \in G=\Aut(\X)$ be a generalized superelliptic automorphism of order $n$ and $N$ be the normalizer of $H=\langle \tau \rangle$ in $G$. Then, up to isomorphisms, $\X$, $\tau$ and $N$ are described as indicated in the above cases.

\begin{table}[htp]
\caption{default}
\begin{center}
\begin{tabular}{|c|c|c|}
\hline
$\bar N $  &   Equation & Genus \\
\hline
$C_m$  &  \cref{equCm} & \cref{gen-Cm} \\
$D_m$ & \cref{equDm}  & \cref{gen-Dm}  \\
$A_4$  &  \cref{equA4}  & \cref{gen-A4}  \\
$S_4$ &   \cref{equS4}  & \cref{gen-S4}   \\
$A_5$   & \cref{equA5}  & \cref{gen-A5}   \\
\hline
\end{tabular}
\end{center}
\label{default}
\end{table}%
\end{thm}
\begin{proof} 
We will consider all cases one by one. \\

\noindent \textbf{Case }{$\bf \N \cong C_{m}$:}   In this case, $\N=\big\langle a(x)=\omega_{m} x \big\rangle$ and  the curve $\X$ has the form 
\begin{equation}\label{equCm}
\begin{array}{l}
 \X: \quad y^{n}=x^{l_{0}} (x^{m}-1)^{l_{1}}\prod_{j=2}^{r} (x^{m}-a_{j}^{m})^{l_{j}},
 \end{array}
\end{equation} 
where (i) $a_{2},\ldots,a_{r} \in \C-\{0,1\}, \; a_{i}^{m} \neq a_{j}^{m}$ and (ii) $\gcd(n,l_{0},l_{1},\ldots,l_{r})=1$.  
If 
\[
\alpha(x,y)=(\omega_{m} x, \omega_{m}^{l_{0}/n}y),
\]
 then 
\[
N=\langle \tau, \alpha: \tau^{n}=1, \alpha^{m}=\tau^{l_{0}}, \tau \alpha = \alpha \tau \rangle.
\]

The signature of $\X/H$ is 
\[ 
\left\{ 
\begin{array}{ll}
\left(0;\frac{n}{n_{1}},\stackrel{m}{\ldots},\frac{n}{n_{1}},\ldots, \frac{n}{n_{r}},\stackrel{m}{\ldots},\frac{n}{n_{r}}\right), & \textit{if  } l_{0}=0,\; m\sum_{j=1}^{r}l_{j} \equiv 0 \mod(n),   \\
\left(0;\frac{n}{n_{0}},\frac{n}{n_{1}},\stackrel{m}{\ldots},\frac{n}{n_{1}},\ldots, \frac{n}{n_{r}},\stackrel{m}{\ldots},\frac{n}{n_{r}}\right), & \textit {if }  l_{0} \neq 0,\; l_{0}+m\sum_{j=1}^{r}l_{j} \equiv 0 \mod(n),\\
\left(0;\frac{n}{n_{0}},\frac{n}{n_{r+1}},\frac{n}{n_{1}},\stackrel{m}{\ldots},\frac{n}{n_{1}},\ldots, \frac{n}{n_{r}},\stackrel{m}{\ldots},\frac{n}{n_{r}}\right), & \textit{if  } l_{0} \neq 0, \; l_{0}+m\sum_{j=1}^{r}l_{j} \not\equiv 0 \mod(n),
\end{array}
\right.
\]
where (in the last situation) $l_{r+1} \in \{1,\ldots,n-1\}$ is the class of $-(l_{0}+m\sum_{j=1}^{r}l_{j})$ module $n$.
The signature of $\X/N$ is 
\[\left\{ 
\begin{array}{ll}
\left(0;m,m,\frac{n}{n_{1}},\frac{n}{n_{2}},\ldots,\frac{n}{n_{r}}\right), & \mbox{if $l_{0}=0$,\; $m\sum_{j=1}^{r}l_{j} \equiv 0 \mod(n)$, }\\
\left(0;m,\frac{mn}{n_{0}},\frac{n}{n_{1}},\frac{n}{n_{2}},\ldots,\frac{n}{n_{r}}\right), & \mbox{if $l_{0} \neq 0$, $l_{0}+m\sum_{j=1}^{r}l_{j} \equiv 0 \mod(n)$,}\\
\left(0;\frac{mn}{n_{0}},\frac{mn}{n_{r+1}},\frac{n}{n_{1}},\frac{n}{n_{2}},\ldots,\frac{n}{n_{r}}\right), & \mbox{if $l_{0} \neq 0$, $l_{0}+m\sum_{j=1}^{r}l_{j} \not\equiv 0 \mod(n)$,}
\end{array}
\right.
\]

The genus of $\X$ is
\begin{equation}\label{gen-Cm}
\left\{ 
\begin{array}{l}
1+\frac{1}{2} \left(   (rm-2)n-m\sum_{j=1}^{r}n_{j}  \right), \quad  \text{if }     l_{0}=0,\; m\sum_{j=1}^{r}     l_{j} \equiv 0 \mod(n),   \\
1+\frac{1}{2}\left((rm-1)n-m\sum_{j=1}^{r}n_{j}\right),  \quad \text{if }    l_{0} \neq 0, l_{0}+m\sum_{j=1}^{r}l_{j} \equiv 0 \mod(n),     \\
1+\frac{1}{2}\left(rmn-m\sum_{j=1}^{r}n_{j}\right),  \quad  \text{if }   l_{0} \neq 0, l_{0}+m\sum_{j=1}^{r}l_{j} \not\equiv 0 \mod(n).     \\
\end{array}
\right.
\end{equation}

\noindent \textbf{Case }{$\bf \N \cong D_{m}$:}  In this case, $D_{m}:=\Big\langle a(x)=\omega_{m}x, b(x)=\frac{1}{x}\Big\rangle$ and   the curve $\X$ has the form
\begin{equation}\label{equDm}  
\begin{array}{l}
 \X: \quad y^{n}=x^{l_{0}}(x^{m}-1)^{l_{r+1}}(x^{m}+1)^{l_{r+2}} \prod_{j=1}^{r} (x^{2m}-(a_{j}^{m}+a_{j}^{-m})x^{m}+1)^{l_{j}},
\end{array}
\end{equation}
where (i) $a_{i}^{\pm m} \neq a_{j}^{\pm m} \neq 0,\pm1$, (ii) $2l_{0}+m(l_{r+1}+l_{r+2})+2m(l_{1}+\cdots+l_{r}) \equiv 0 \mod(n)$ and (iii)
$\gcd(n,l_{0},l_{1},\ldots,l_{r+2})=1$.
If $\alpha$ and $\beta$ are as follows 
\[
\alpha(x,y)  =    (\omega_{m} x, \omega_{m}^{l_{0}/n}y), \quad 
\beta(x,y)  =      \left(  \frac{1}{x}, \frac{(-1)^{l_{r+1}/n}}   {x^{(2l_{0}+m(l_{r+1}+l_{r+2}+2(l_{1}+\cdots+l_{r})))/n}}      y \right),
\]
then
\[
N=\langle \tau,\alpha, \beta: \tau^{n}=1, \alpha^{m}=\tau^{l_{0}}, \; \beta^{2}=\tau^{l_{r+1}}, \; \tau \alpha=\alpha \tau,\; \tau \beta=\beta \tau \rangle.
\]

The signature of $\X/H$ is
$$\begin{array}{l}
 \left(0;\frac{n}{n_{0}},\frac{n}{n_{0}},\frac{n}{n_{r+1}},\stackrel{m}{\ldots},\frac{n}{n_{r+1}},\frac{n}{n_{r+2}},\stackrel{m}{\ldots},\frac{n}{n_{r+2}},
\frac{n}{n_{1}},\stackrel{2m}{\ldots},\frac{n}{n_{1}},\ldots, \frac{n}{n_{r}},\stackrel{2m}{\ldots},\frac{n}{n_{r}}\right),
\end{array}
$$
the signature of $\X/N$ is
$$\begin{array}{l}
\left(0;\frac{mn}{n_{0}},\frac{2n}{n_{r+1}},\frac{2n}{n_{r+2}}, \frac{n}{n_{1}},\frac{n}{n_{2}},\ldots, \frac{n}{n_{r}}\right),
\end{array}
$$
and the genus of $\X$ is
\begin{equation}\label{gen-Dm}
\begin{array}{l}
g=1+\frac{1}{2}\left(2m(r+1)n-2n_{0}-m\left(n_{r+1}+n_{r+2}+2\sum_{j=1}^{r}n_{j}\right)\right).
\end{array}
\end{equation}

\noindent  \textbf{Case }{$\bf \N \cong A_{4}$:}   In this case, $A_{4}:=\Big\langle a(x)=-x, b(x)=\frac{i-x}{i+x}\Big\rangle$ and 
$\X$ has the form 
\begin{equation}\label{equA4}
\begin{array}{l}
\X: \;  y^{n}=   R_{1}(x)^{l_{r+1}} R_{2}(x)^{l_{r+2}} R_{3}(x)^{l_{r+3}} \prod_{j=1}^{r} \left(R_{1}(x)^{3} + 12 i b_{j} \sqrt{3} R_{3}(x)^{2}\right)^{l_{j}},
\end{array}
\end{equation}
where
$$
\begin{array}{ll}
R_{1}(x) & =x^{4}-2i\sqrt{3} x^{2}+1, \\
R_{2}(x) & =x^{4}+2i\sqrt{3} x^{2}+1, \\
R_{3}(x) & =x(x^{4}-1), \\
 f(x) & =\frac{R_{1}(x)^{3}}{-12 i \, \sqrt{3} \; R_{3}(x)^{2}}, \\
\end{array}
$$
such that (i) $b_{j} \neq b_{i} \in \C\setminus\{0,1\}$, 
(ii) $4(l_{r+1}+l_{r+2})+6l_{r+3}+12(l_{1}+\cdots+l_{r})\equiv 0 \mod(n)$, and  (iii) $\gcd(n,l_{1},\ldots,l_{r+3})=1$. If
$$\begin{array}{l}
\alpha(x,y)=(-x,(-1)^{l_{r+3}/n}y), \quad \beta(x,y)=(b(x),F(x)y),
\end{array}
$$
where
$$\begin{array}{l}
F(x)= \frac{ 2^{(l_{r+1}+l_{r+2})/n}(1- I\sqrt{3})^{l_{r+1}/n} (1+ I\sqrt{3})^{l_{r+2}/n} (8i)^{l_{r+3}/n}  (-64)^{(l_{1} + \cdots + l_{r})/n } }    
{(x+i)^{(4(l_{r+1}+l_{r+2})+6l_{r+3}+12(l_{1}+\cdots+l_{r}))/n}},
\end{array}
$$
then 
$$
\begin{array}{ll}
N  = &  \langle \tau, \alpha, \beta:  \, \, \tau^{n}=1,\alpha^{2}=\tau^{l_{r+3}}, \; \beta^{3}=\tau^{l_{r+1}+l_{r+2}+l_{r+3}+l_{1}+\cdots+l_{r}},\\
&(\alpha \beta)^{3}=\tau^{l_{r+1}+l_{r+2}+3l_{r+3}+l_{1}+\cdots+l_{r}},   \tau \alpha=\alpha \tau, \;  \tau \beta=\beta \tau\rangle.
 \end{array}
$$

The signature of $\X/H$ is
$$\begin{array}{l}
\left(0;\frac{n}{n_{r+1}},\stackrel{4}{\ldots},\frac{n}{n_{r+1}}, \frac{n}{n_{r+2}},\stackrel{4}{\ldots},\frac{n}{n_{r+2}} ,\frac{n}{n_{r+3}},\stackrel{6}{\ldots},\frac{n}{n_{r+3}}, \frac{n}{n_{1}},\stackrel{12}{\ldots},\frac{n}{n_{1}},\ldots, \frac{n}{n_{r}},\stackrel{12}{\ldots},\frac{n}{n_{r}}\right),
\end{array}
$$
the signature of $\X/N$ is
$$\begin{array}{l}
\left(0;\frac{3n}{n_{r+1}},\frac{3n}{n_{r+2}},\frac{2n}{n_{r+3}}, \frac{n}{n_{1}},\frac{n}{n_{2}},\ldots,\frac{n}{n_{r}}\right),
\end{array}
$$
and the genus of $\X$ is  
\begin{equation}\label{gen-A4}
\begin{array}{l}
g=1-n+2n_{r+1}+2n_{r+2}+3n_{r+2}-6\sum_{j=1}^{r}n_{j}.
\end{array}
\end{equation} 

\noindent  \textbf{Case }  {$\bf \N \cong S_{4}$:}
In this case, $S_{4}:=\Big\langle a(x)=ix, b(x)=\frac{i-x}{i+x}\Big\rangle$ and $\X$ has the form 
\begin{equation}\label{equS4}
\begin{array}{l}
\X: \quad y^{n}=R_{1}(x)^{l_{r+1}} R_{2}(x)^{l_{r+2}}R_{3}(x)^{l_{r+3}} \prod_{j=1}^{r} (R_{1}(x)^{3}-108 b_{j} R_{3}(x)^{4})^{l_{j}},
\end{array}
\end{equation}
where
$$
\begin{array}{ll}
R_{1}(x)		& =x^{8}+14x^{4}+1, \\
 R_{2}(x)		& =x^{12}-33x^{8}-33x^{4}+1, \\
R_{3}(x)		& =x(x^{4}-1),    \\
f(x)			& =\frac{R_{1}(x)^{3}}{108R_{3}(x)^{4}},
\end{array}
$$
such that (i) $b_{j} \neq b_{i} \in \C \setminus\{0,1\}$, (ii) $8l_{r+1}+12l_{r+2}+6l_{r+3}+24(l_{1}+\cdots+l_{r}) \equiv 0 \mod(n)$ and 
(iii) $\gcd(n,l_{1},\ldots,l_{r+3})=1$. If
$$\begin{array}{l}
\alpha(x,y)=(ix,i^{l_{r+3}/n}y),  \;\beta(x,y)  =(b(x),F(x)y),
\end{array}
$$
$$\begin{array}{l}
F(x)  = \frac   {16^{l_{r+1}/n} \cdot (-64)^{l_{r+2}/n} \cdot (8i)^{l_{r+3}/n} \cdot 4096^{(l_{1}+\cdots+l_{r}))/n}}  
{(x+i)^{(8l_{r+1}+12l_{r+2}+6l_{r+3}+24(l_{1}+\cdots+l_{r}))/n}},
\end{array}
$$
then
$$
\begin{array}{ll}
N=&\langle \tau, \alpha, \beta: \;    \tau^{n}=1, \; \alpha^{4}=\tau^{l_{r+3}}, \;  \beta^{3}=\tau^{l_{r+1}+l_{r+2}+l_{r+3}+l_{1}+\cdots+l_{r}}, \\
& (\alpha \beta)^2=\tau^{}, \; \tau \alpha=\alpha \tau, \; \tau \beta= \beta \tau  \rangle. 
\end{array}
$$

The signature of $\X/H$ is
$$\begin{array}{l} 
\left(0;\frac{n}{n_{r+1}},\stackrel{8}{\ldots},\frac{n}{n_{r+1}},\frac{n}{n_{r+2}},\stackrel{12}{\ldots},\frac{n}{n_{r+2}}, \frac{n}{n_{r+3}},\stackrel{6}{\ldots},\frac{n}{n_{r+3}},\frac{n}{n_{1}},\stackrel{24}{\ldots},\frac{n}{n_{1}},\ldots, \frac{n}{n_{r}},\stackrel{24}{\ldots},\frac{n}{n_{r}}\right), 
\end{array}
$$
the signature of $\X/N$ is
$$\begin{array}{l}
\left(0;\frac{3n}{n_{r+1}},\frac{2n}{n_{r+2}},\frac{4n}{n_{r+3}},\frac{n}{n_{1}},\frac{n}{n_{2}},\ldots,\frac{n}{n_{r}}\right), 
\end{array}
$$
and the genus of $\X$ is
\begin{equation}\label{gen-S4}
\begin{array}{l}
  g=1+12(1+r)n-4n_{r+1}-6n_{r+2}-3n_{r+3}-12\sum_{j=1}^{r}n_{j}. 
  \end{array}
\end{equation}  

\noindent \textbf{Case }  {$\bf \N \cong A_{5}$:}
In this case, 
\[
A_{5}:=\Big\langle a(x)=\omega_{5} x, b(x)=\frac{(1-\omega_{5}^{4})x+(\omega_{5}^{4}-\omega_{5})}{(\omega_{5}-\omega_{5}^{3})x+(\omega_{5}^{2}-\omega_{5}^{3})}\Big\rangle
\]
and $\X$   has the form
\begin{equation}\label{equA5}
\begin{array}{l}
\X: \quad y^{n}=R_{1}(x)^{l_{r+1}} R_{2}(x)^{l_{r+2}}R_{3}(x)^{l_{r+3}}     \prod_{j=1}^{r} (R_{1}(x)^{3}- 1728  b_{j} R_{3}(x)^{5})^{l_{j}},
\end{array}
\end{equation}
where
$$
\begin{array}{ll}
R_{1}(x) 		& = - x^{20}+228x^{15}-494x^{10}-228x^{5}-1, \\
 R_{2}(x) 		& =  x^{30}+522 x^{25}-10005 x^{20}-10005 x^{10}-522 x^5+1, \\
  R_{3}(x)	 	& =  x(x^{10}+11x^{5}-1), \\
  f(x) 			& =  \frac{ R_{1}(x)^{3}}{1728  R_{3}(x)^{5}},\\
\end{array}
$$
such that 
\begin{enumerate}[\upshape(i), nolistsep]
\item $b_{j} \neq b_{i} \in \C  \setminus \{0,1\}$, 
\item   $20l_{r+1}+30l_{r+2}+12l_{r+3}+60(l_{1}+\cdots+l_{r}) \equiv 0 \mod(n),$
\item $\gcd(n,l_{1},\ldots,l_{r+3})=1$.
\end{enumerate}
$$\begin{array}{l}
N=\langle \tau, \alpha,\beta: \, \,  \alpha^{5}=\tau^{l_{r+3}}, \; \beta^{3}=\tau^{l}, \; (\alpha \beta)^{3}=\tau^{t} \rangle, 
\end{array}
$$
such that 
$\alpha(x,y)=(a(x),{\omega_{5}^{l_{r+3}/n}} y)$ and $\beta(x,y)=(b(x),F(x) y)$, 
where $F(x)$ is a rational map satisfying 
$$\begin{array}{l}
F(b^{2}(x)) \cdot F(b(x)) \cdot F(x)={\omega_{n}^{l}},
\end{array}
$$
for a suitable 
${l \in \{0,\ldots,n-1\}}$, and
$$\begin{array}{l}
F(x)^{n}=T_{1}^{l_{r+1}+3(l_{1}+\cdots +l_{r})}(x) T_{2}^{l_{r+2}}(x) T_{3}^{l_{r+3}}(x),
\end{array}
$$
where $T_{j}(x)=\frac {R_{j}(b(x))} {R_{j}(x)}$, for $j=1,2,3$.

The signature of $\X/H$ is
$$\begin{array}{l}
\left(
0;\frac{n}{n_{r+1}},\stackrel{20}{\ldots},\frac{n}{n_{r+1}},\frac{n}{n_{r+2}},\stackrel{30}{\ldots},\frac{n}{n_{r+2}}, \frac{n}{n_{r+3}},\stackrel{12}{\ldots},\frac{n}{n_{r+3}},\frac{n}{n_{1}},\stackrel{60}{\ldots},\frac{n}{n_{1}},\ldots, \frac{n}{n_{r}},\stackrel{60}{\ldots},\frac{n}{n_{r}}
\right),
\end{array}
$$
and the signature of $\X/N$ is
$$\begin{array}{l}
\left( 0;\frac{3n}{n_{r+1}},\frac{2n}{n_{r+2}},\frac{5n}{n_{r+3}},\frac{n}{n_{1}},\frac{n}{n_{2}},\ldots,\frac{n}{n_{r}} \right),
\end{array}
$$
and the genus of $\X$ is
\begin{equation}\label{gen-A5}
\begin{array}{l}
g=1+30(r+1)n-10n_{r+1}-15n_{r+2}-6n_{r+3}-30\sum_{j=1}^{r}n_{j}.
\end{array}
\end{equation}
\end{proof}

\section{Appendix B: Computing cyclic $n$-gonal curves}
Consider the collection ${\mathcal F}_{g}$ of all the tuples $(n,s;n_{1},\ldots,n_{s})$ satisfying the following Harvey's conditions:
\begin{enumerate}[nolistsep] 
\item $n \geq 2$, $s \geq 3$, $2 \leq n_{1} \leq n_{2} \leq \cdots \leq n_{s} \leq n$;
\item $n_{j}$ is a divisor of $n$, for each $j=1,\ldots,s$;
\item ${\rm lcm}\left(n_{1},\ldots,n_{j-1},n_{j+1},\ldots, n_{s}\right)=n$, for every $j=1,\ldots,s$;
\item if $n$ is even, then $\#\{j\in\{1,\ldots,s\}: n/n_{j} \; \mbox{is odd}\}$ is even;
\item $2(g-1)=n\left(s-2-\sum_{j=1}^{s}n_{j}^{-1}\right)$.
\end{enumerate}

For each tuple $(n,s;n_{1},\ldots,n_{s}) \in {\mathcal F}_{g}$ we consider the collection ${\mathcal F}_{g}(n,s;n_{1},\ldots,n_{s})$ of tuples $(l_{1},\ldots,l_{s})$ so that
\begin{enumerate}[nolistsep] 
\item $l_{1},\ldots, l_{s} \in \{1,\ldots, n-1\}$;
\item $l_{1}+\cdots+l_{s}  \equiv 0 \mod(n)$;
\item $\gcd(n,l_{j})=n/n_{j}$, for each $j=1,\ldots,s$.
\end{enumerate}

Now, for each such tuple $(l_{1},\ldots,l_{s}) \in {\mathcal F}_{g}(n,s;n_{1},\ldots,n_{s})$ we may consider the epimorphism
\begin{equation}
\theta:\Delta=\langle c_{1},\ldots, c_{s}: c_{1}^{n_{1}}=\cdots=c_{s}^{n_{s}}=c_{1}\cdots c_{s}=1\rangle \to C_{n}=\langle \tau \rangle: c_{j} \mapsto \tau^{l_{j}}.
\end{equation}

Our assumptions ensure that the kernel $\Gamma=\ker(\theta)$ is a torsion free normal co-compact Fuchsian subgroup of $\Delta$ with $\X={\mathbb H}/\Gamma$ a closed Riemann surface of genus $g$ admitting a cyclic group $H \cong C_{n}$ as a group of conformal automorphisms with quotient orbifold $\X/H={\mathbb H}/\Delta$; a genus zero orbifold with exactly $s$ cone points of respective orders $n_{1},\ldots,n_{s}$. The surface $\X$ corresponds to a cyclic $n$-gonal curve
\begin{equation}
\begin{array}{l}
C(n,s;l_{1},\ldots,l_{s};p_{1},\ldots,p_{s}): \quad y^{n}=\prod_{j=1}^{s}(x-p_{j})^{l_{j}},
\end{array}
\end{equation}
for  pairwise different values $p_{1},\ldots,p_{s} \in \C$, 
and $H$  generated by $\tau(x,y)=(x,\omega_{n}y)$.

Different tuples $(l_{1},\ldots,l_{s}),(l_{1}',\ldots,l_{s}') \in {\mathcal F}_{g}(n,s;n_{1},\ldots,n_{s})$ might provide isomorphic pairs $(\X,H)$ and $(\X',H')$ (i.e., there is an isomorphism between the Riemann surfaces conjugating the cyclic groups). In general this is a difficult problem to determine if different tuples define  isomorphic pairs. But, in the non-exceptional fully generalized superelliptic situation (see \cref{unicidad0}) the uniqueness of the superelliptic cyclic group of level $n$  permits us to see that $(\X,H)$ and $(\X',H')$ are isomorphic pairs if and only if the corresponding 
curves $C(n,s;l_{1},\ldots,l_{s};p_{1},\ldots,p_{s})$ and $C(n,s;l'_{1},\ldots,l'_{s};p'_{1},\ldots,p'_{s})$ are isomorphic, this last being equivalent to the existence of M\"obius transformation $t \in \PSL_{2}(\C)$, a permutation $\eta \in S_{s}$ and an element $u \in \{1,\ldots,n-1\}$ with $\gcd(u,n)=1$,
such that
\begin{enumerate}[nolistsep] 
\item[(a)] $l'_{j} \equiv u l_{\eta(j)} \mod(n)$, for $j=1,\ldots,s$,
\item[(b)] $p'_{\eta(j)}=t(p_{j})$, for $j=1,\ldots,s$.
\end{enumerate}

The above (together with  \cref{exponentes}) may be used to construct all the possible (generalized) superelliptic curves of lower genus  in a similar fashion as done in \cite{MPRZ} for the superelliptic case.

\bibliography{references}{}
\end{document}